\newtheorem{thm}{Theorem}
\newtheorem{lem}[thm]{Lemma}
\newtheorem{cor}[thm]{Corollary}
\newtheorem{proposition}[thm]{Proposition}
\newtheorem{rem}[thm]{Remark}
\newtheorem{conj}[thm]{Conjecture}
\newcommand{\ex}{{\rm ex}}
\newcommand{\cL}{{\mathcal L}}
\newcommand{\E}{\mathbb{E}}
\begin{document}
	
\title{Splits with forbidden subgraphs}
\author{Maria Axenovich}
\address{Karlsruhe Institute of Technology, Karlsruhe, Germany}
\email{maria.aksenovich@kit.edu}
\author{Ryan R. Martin}
\address{Iowa State University, Ames, Iowa, USA}
\email{rymartin@iastate.edu}
\date{\today}

\keywords{split, forbidden subgraphs, extremal function}
\subjclass[2010]{05C35,05D40}

\maketitle
	
\begin{abstract}
	In this paper, we fix a graph $H$ and ask into how many vertices each vertex of a clique of size $n$ can be ``split'' such that the resulting graph is $H$-free. Formally: A graph is an $(n,k)$-graph if its vertex set is a pairwise disjoint union of $n$ parts of size at most $k$, such that there is an edge between any two distinct parts. Let 
	$$ f(n,H) 	= 	\min \{k \in \mathbb N : \mbox{there is an $(n,k)$-graph $G$ such that $H\not\subseteq G$}\} . $$
	Barbanera and Ueckerdt \cite{Ba} observed that $f(n, H)=2$ for any graph $H$ that is not bipartite. 
	If a graph $H$ is bipartite and  has a well-defined Tur\'an exponent, i.e., ${\rm ex}(n, H) = \Theta(n^r)$ for some $r$, we show that $\Omega (n^{2/r-1}) = f(n, H) = O (n^{2/r -1} \log ^{1/r} n)$.
	We extend this result to all bipartite graphs for which upper and a lower Tur\'an exponents do not differ by much. In addition, we prove that 
	$f(n, K_{2,t}) =\Theta(n^{1/3})$ for any fixed integer $t\geq 2$.
\end{abstract}
	
\section{Introduction}
	
For positive integers $n$ and $k$, we say that a graph $G$ is an {\it $(n,k)$-graph} if $G$ is an $n$-partite graph with exactly one edge between any two distinct parts, also referred to as \textit{blobs}, such that each blob has size at most $k$.  Note that contracting each blob of an $(n,k)$-graph into a single vertex results in a complete graph on $n$-vertices, $K_n$.  Thus an $(n,k)$-graph is sometimes referred to as a $k$-{\it split} of $K_n$.   In general, a $k$-split of a graph $H$ on a vertex set $\{v_1, \ldots, v_m\}$ is a graph $F$ whose vertex set is a pairwise disjoint union of sets $V_1, \ldots, V_m$, each of size at most $k$, such that there is an edge between $v_i$ and $v_j$ if and only if there is an edge between $V_i$ and $V_j$, $1\leq i<j\leq m$. 

A study of graph splits traces back to Heawood \cite{H}, who  was interested in the smallest $k$ such that $H$ has a $k$-split which is a planar graph. This is the so-called planar  split thickness of $H$, see also Eppstein et al. \cite{E}. Heawood proved that the planar split thickness of a clique on $12$ vertices is $2$.  In addition, the notion of splits was extensively used in bar visibility representations of graphs. In particular, it is known that if a graph $H$ has a $k$-split that is planar and two-connected, then $H$ has a bar visibility representation with $k$ bars per vertex, see a number of papers by West, Hutchinson and others \cite{W1, W2, W3, W4}. 

While most of the existent literature on splits concerns planarity, here we address a more general question about splits that belong to a given monotone family.
This question for complete graphs was asked by Ueckerdt and the study was initiated in a Master's thesis by Barbanera \cite{Ba}. They defined the following function for a graph $H$:
\begin{align*}
	f(n,H) 	= 	\min \{k \in \mathbb N : \mbox{there is an $(n,k)$-graph $G$ such that $H\not\subseteq G$}\} .
\end{align*}
It is clear that $f(n, H)$ is well defined as long as $\Delta(H)$, the maximum degree of $H$, is at least $2$. Indeed,  we can always take $k=n-1$, and consider an $(n,n-1)$-graph that is a perfect matching. This graph contains no graph of maximum degree greater than one. So, $f(n, H)\leq n-1$ if $\Delta(H)\geq 2$. Moreover, if $n<|V(H)|$, then $f(n,H)=1$. In this paper, we give better bounds on $f(n,H)$. The case where  $H$ is not bipartite is completely resolved and we provide the proof here for completeness. 
	
\begin{proposition}[Barbanera, Ueckerdt~\cite{Ba}]
	If $H$ is a non-bipartite graph and  $n\geq |V(H)|$, then $f(n, H) = 2$.
\end{proposition}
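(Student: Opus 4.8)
The plan is to bound $f(n,H)$ from below and from above by $2$, both steps being short. For the lower bound, observe that up to isomorphism the only $(n,1)$-graph is $K_n$ itself: a part of size $0$ cannot send an edge to any other part, so each of the $n$ parts must be a single vertex, and there is an edge between every two of them. Hence, for every $n\ge |V(H)|$ (the range in which the question is meaningful, since $f$ is studied asymptotically in $n$), the unique $(n,1)$-graph contains $H$, and therefore $f(n,H)\ge 2$.

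For the upper bound it suffices to exhibit an $(n,2)$-graph that is bipartite: a non-bipartite $H$ contains an odd cycle, and every subgraph of a bipartite graph is bipartite, so a bipartite $(n,2)$-graph is automatically $H$-free. I would take parts $V_i=\{x_i,y_i\}$ for $i=1,\dots,n$ and, for each pair $i<j$, insert exactly the edge $x_iy_j$ (equivalently: transitively orient $K_n$ and replace each arc $i\to j$ by the edge $x_iy_j$). Between any two parts $V_i,V_j$ there is then exactly one edge, so this is a legitimate $(n,2)$-graph, and every edge joins an $x$-vertex to a $y$-vertex, so the graph is bipartite with classes $\{x_1,\dots,x_n\}$ and $\{y_1,\dots,y_n\}$. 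Thus $f(n,H)\le 2$, and combining the two bounds gives $f(n,H)=2$.

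I do not expect a genuine obstacle here; the only points needing a line of verification are that the construction above really meets the definition of an $(n,2)$-graph (exactly one cross edge between any two blobs, each blob of size at most $2$) and the trivial remark that for $n<|V(H)|$ one has instead $f(n,H)=1$, so the statement is to be read for $n$ large enough. An alternative phrasing of the upper bound, avoiding an explicit construction, is simply that $K_n$ admits a bipartite $2$-split, which is exactly what the construction witnesses.
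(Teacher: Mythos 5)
Your proof is correct and takes essentially the same approach as the paper: both construct a bipartite $(n,2)$-graph by two-coloring each blob and always selecting a cross-color edge between blobs, your $x_i y_j$ ($i<j$) rule being one concrete instantiation of the paper's ``one red, one blue'' choice. You additionally spell out the easy lower bound $f(n,H)\ge 2$ (via the uniqueness of the $(n,1)$-graph $K_n$), which the paper leaves implicit.
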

	
\begin{proof}
	Construct $G$, an $(n,2)$-graph with blobs of size $2$ as follows: For each blob, color one vertex red and another blue. 
For every pair of blobs, choose an edge between that contains one red vertex and one blue vertex.  The resulting $(n,2)$-graph is bipartite and thus  does not contain $H$ as a subgraph. 
\end{proof}

In this paper, we show that $f(n, H)$ for bipartite graphs is closely connected to the extremal function $\ex (\ell,H)$, that is, the largest number of edges in an $\ell$-vertex graph that is $H$-free.  Observe first that an $(n,k)$-graph has at  least $\binom{n}{2}$ edges. If it is $H$-free, we must have that $\ex(nk, H)\geq \binom{n}{2}$.
We show that this inequality on $k$ given by a natural necessary condition can be almost attained by an $H$-free $(n,k)$-graph, provided good bounds for $\ex (\ell,H)$ are known.

\begin{thm}\label{main}
	Let $H$ be a bipartite graph that is not a forest and such that  for any sufficiently large $\ell$, $C\ell^a\leq \ex(\ell, H)\leq C'\ell^b$ for positive constants $C, C'$ and exponents $a, b$ with $b-a < \frac{(2-b)(b-1)}{5-b}$. 
	\begin{itemize}
		\item 	If $\ex (nk,H)  \geq  12n^2 \log n$, then $f(n,H) \leq k+ o(k)$. 
		\item 	If $\ex (nk, H) < \binom{n}{2}$, then $f(n, H) \geq k$.
	\end{itemize}
\end{thm}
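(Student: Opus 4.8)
The second bullet is immediate from counting. Any $(n,k)$-graph $G$ has at most $nk$ vertices and, since it carries an edge between each of the $\binom n2$ pairs of its blobs, at least $\binom n2$ edges; as $\ex(\cdot,H)$ is nondecreasing in its first argument, an $H$-free $(n,j)$-graph with $j\le k$ would force $\binom n2\le e(G)\le\ex(nj,H)\le\ex(nk,H)$. Hence if $\ex(nk,H)<\binom n2$ there is no $H$-free $(n,j)$-graph for any $j\le k$, i.e. $f(n,H)\ge k$.

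For the first bullet I would argue probabilistically, in three stages. \emph{Stage 1 (a dense, near-regular host).} Using the lower bound $C\ell^a\le\ex(\ell,H)$ to know that $H$-free graphs of high density exist, together with $\ex(nk,H)\ge 12n^2\log n$, produce an $H$-free graph of the appropriate order and apply a regularization lemma of Erd\H os--Simonovits type to pass to an $H$-free subgraph $G^\star$ that is $K$-almost-regular for an absolute constant $K$, has $e(G^\star)=\Omega(n^2\log n)$ edges, and lives on $m\le nk$ vertices with $m/n=\omega(\log n)$. The upper bound $\ex(\ell,H)\le C'\ell^b$ is used here to control the maximum degree of the host (hence the cost of regularization), and it is the arithmetic of matching how many vertices and how many edges survive this step — against the gap between the density exponents $a$ and $b$, and against the constraint $k\gtrsim n^{(2-b)/b}$ that the hypothesis already forces — that produces the condition $b-a<\frac{(2-b)(b-1)}{5-b}$. \emph{Stage 2 (random split).} Assign each vertex of $G^\star$ independently and uniformly to one of $n$ blobs; a Chernoff bound shows every blob then has size $(1+o(1))\frac{m}{n}\le(1+o(1))k$ with probability $1-o(1)$, using $k=\omega(\log n)$, which itself follows from $12n^2\log n\le\ex(nk,H)\le C'(nk)^b$ and $b<2$. \emph{Stage 3 (every blob pair gets an edge).} Fix blobs $B_i,B_j$ and condition on $B_i$: they fail to be joined only if $B_j$ misses $N_{G^\star}(B_i)$; since $G^\star$ is almost-regular with $\Omega(n^2\log n)$ edges, a short computation gives $|N_{G^\star}(B_i)|=\Omega(n\log n)$ with overwhelming probability, whence $\Pr[B_i\not\sim B_j]\le e^{-\Omega(\log n)}\le n^{-2}$ — this is where the constant $12$ is spent. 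A union bound over the $\binom n2$ pairs then yields, with positive probability, a split in which every blob has size $(1+o(1))k$ and every pair of blobs is joined by at least one edge; deleting edges down to exactly one per pair leaves a subgraph of the $H$-free $G^\star$, hence an $H$-free $(n,(1+o(1))k)$-graph, and so $f(n,H)\le k+o(k)$.

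The genuine obstacle is the interaction of Stages 1 and 3. Splitting a \emph{fixed} extremal $H$-free graph does not work, because the events ``this edge of the host crosses from $B_i$ to $B_j$'' are strongly positively correlated through shared endpoints, so neither a Chernoff nor a bounded-differences inequality delivers $\Pr[B_i\not\sim B_j]\le n^{-2}$ for an arbitrary host, and merely bounding the maximum degree does not suffice; near-regularity is exactly what tames this. The task therefore reduces to the quantitative one of producing an almost-regular $H$-free graph with $\Omega(n^2\log n)$ edges on at most $nk$ vertices, and the regularization step loses a polylogarithmic factor in the edge count and a polynomial factor in the vertex count at once; reconciling these two losses is precisely what $b-a<\frac{(2-b)(b-1)}{5-b}$ — and, for the extension stated after the theorem, a suitable weakening of it — makes possible. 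For a graph with a clean Tur\'an exponent, $a=b=r$: the gap vanishes, the reconciliation is automatic, and the construction yields $f(n,H)=O(n^{2/r-1}\log^{1/r}n)$, with the $\log^{1/r}n$ being exactly the price of the union bound in Stage 3.
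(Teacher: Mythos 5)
Your second bullet is correct and is exactly how the paper argues it (via Lemma~\ref{LemGenN}), though note the paper's statement gives $\binom n2\le\ex(nk,H)$ directly from counting edges of the $(n,k)$-graph, without needing the intermediate $j\le k$ step you introduce.

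The upper bound is where you diverge from the paper, and the divergence rests on a claim that is in fact contradicted by the paper's proof. You assert that ``merely bounding the maximum degree does not suffice'' to control the positively correlated events, and that constant-factor almost-regularity of the host is essential. The paper does \emph{not} produce a $K$-almost-regular host for absolute constant $K$: Lemma~\ref{max-degree} only bounds $\Delta(G)$ by $q$ times the average degree, where $q=C(N^b/\ex(N,H))^{1/(b-1)}$ is allowed to grow with $N$ when $a<b$. Indeed it is unclear that a genuine Erd\H os--Simonovits $K$-almost-regularization with constant $K$ is even available when the lower and upper Tur\'an exponents differ, so your Stage~1 is asking for more than may exist. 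The paper instead pairs that weaker max-degree bound with a concentration inequality specifically designed for positively correlated indicator families: Janson's variant of Suen's inequality (Theorem~\ref{thm:Janson}). The dependency graph there has edges only between host edges that share an endpoint, and the term $D=\sum_{i\sim j}\E[I_iI_j]\le N\Delta^2/n^3$ is controlled precisely by the max-degree bound, with no near-regularity needed. The constraint $b-a<\frac{(2-b)(b-1)}{5-b}$ is exactly what guarantees $\mu^2/(48D)\ge 3\log n$ with this $q$-dependent $\Delta$; it is a tradeoff inside a single Suen-type application, not a reconciliation between a regularization loss and a union-bound cost as you describe.

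Your Stage~3 is also not a complete argument as written: conditioning on $B_i$ and asserting $|N_{G^\star}(B_i)|=\Omega(n\log n)$ ``by a short computation'' glosses over the fact that the roughly $\Omega(n\log n)$ edges leaving $B_i$ can collide on common endpoints, and ruling this out requires either a second-moment or a degree argument that you have not supplied (and whose feasibility depends on the regularity you have not established). In short, the shape of your argument (dense host, random coloring, union bound over blob pairs, Chernoff on blob sizes) matches the paper, but the key mechanism does not: the paper's proof goes through with a max-degree bound plus the Suen/Janson inequality, which you dismissed as unavailable, and the constant-$K$ regularization plus conditioning route you propose in its place has gaps both in existence (Stage~1) and in the neighborhood-size computation (Stage~3).
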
 
Thus, $f(n,H)$ is determined up to a logarithmic multiplicative factor. For graphs for which the Tur\'an exponents are known, there is a stronger result. \\

\begin{cor}\label{cor}
	If $H$ is a bipartite graph such that $\ex (\ell,H)=\Theta(\ell^r)$, then there are positive constants $C',C$ such that $C' n^{2/r-1} \leq  f(n, H) \leq Cn^{2/r-1} \log^{1/r} n$.
\end{cor}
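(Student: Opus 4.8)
The plan is to derive Corollary~\ref{cor} from Theorem~\ref{main} by two explicit, essentially optimal choices of $k$, after first locating the exponent $r$. Write the hypothesis $\ex(\ell,H)=\Theta(\ell^r)$ as $C_1\ell^r\le\ex(\ell,H)\le C_2\ell^r$ for all sufficiently large $\ell$, with constants $C_1,C_2>0$. First I would observe that $1\le r<2$: since $f(n,H)$ is well defined we have $\Delta(H)\ge2$, hence $P_3\subseteq H$, and a near-perfect matching is $P_3$-free, so $\ex(\ell,H)\ge\lfloor\ell/2\rfloor$ and $r\ge1$; on the other hand $H$ is bipartite, hence $H\subseteq K_{s,t}$ for some $s\le t$, so $\ex(\ell,H)=O(\ell^{2-1/s})$ by K\H{o}v\'{a}ri--S\'{o}s--Tur\'{a}n and $r<2$. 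Moreover, if $H$ is not a forest then a graph of girth larger than $|V(H)|$ contains no copy of $H$, so $\ex(\ell,H)$ is at least the maximum number of edges of such a graph, which is $\Omega(\ell^{1+c})$ for some $c=c(H)>0$; thus $r>1$. Consequently, whenever $H$ is not a forest we have $1<r<2$, so taking $a=b=r$ the arithmetic condition $b-a=0<\tfrac{(2-b)(b-1)}{5-b}$ of Theorem~\ref{main} holds and the theorem applies.

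For the lower bound I would put $k=\lfloor\varepsilon\,n^{2/r-1}\rfloor$ for a small constant $\varepsilon=\varepsilon(C_2)>0$. Since $r<2$ we have $k\ge1$ and $nk\to\infty$ for $n$ large, so $\ex(nk,H)\le C_2(nk)^r\le C_2\varepsilon^r n^2$; choosing $\varepsilon$ small enough that $C_2\varepsilon^r<1/3$ gives $\ex(nk,H)<\binom{n}{2}$ for all large $n$. This uses only the necessary condition $\ex(nk,H)\ge\binom{n}{2}$ for an $H$-free $(n,k)$-graph, which, as noted before Theorem~\ref{main}, is valid for every $H$ (so no forest/non-forest distinction is needed here). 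Hence $f(n,H)\ge k\ge\tfrac{\varepsilon}{2}\,n^{2/r-1}$ once $n$ is large enough that the floor costs at most a factor $2$, which yields the constant $C'$.

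For the upper bound, suppose first that $H$ is not a forest. I would put $k=\bigl\lceil D\,n^{2/r-1}\log^{1/r}n\bigr\rceil$ with $D=D(C_1)$ so large that $C_1 D^r\ge12$. Then $nk\to\infty$, so $\ex(nk,H)\ge C_1(nk)^r\ge C_1 D^r n^2\log n\ge12n^2\log n$ for $n$ large, and the first bullet of Theorem~\ref{main} gives $f(n,H)\le k+o(k)\le2k\le C\,n^{2/r-1}\log^{1/r}n$ for all large $n$ and a suitable constant $C$. If instead $H$ is a forest, then $r=1$, so $2/r-1=1$ and the trivial bound $f(n,H)\le n-1$ already gives $f(n,H)=O(n)=O\bigl(n^{2/r-1}\log^{1/r}n\bigr)$. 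Finally I would absorb the finitely many remaining small values of $n$ into $C'$ and $C$, using $2\le f(n,H)\le n-1$ and that both $n^{2/r-1}$ and $n^{2/r-1}\log^{1/r}n$ are bounded away from $0$ and $\infty$ on any finite range of $n\ge2$.

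No genuinely hard step is involved beyond Theorem~\ref{main} itself; the part requiring care is the bookkeeping --- verifying that the two closed-form choices of $k$ really meet the quantitative hypotheses $\ex(nk,H)<\binom{n}{2}$ and $\ex(nk,H)\ge12n^2\log n$ using only the $\Theta$-asymptotics (so one must respect the ``for all sufficiently large $\ell=nk$'' clause), that the floors, ceilings and the $o(k)$ error term are harmless, and that the numerical inequality in Theorem~\ref{main} holds, which here is automatic because $a=b=r\in(1,2)$.
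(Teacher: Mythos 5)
Your proof is correct and follows essentially the same route as the paper's: choose $k$ of the right order, check that the quantitative hypotheses of Theorem~\ref{main} are met, and read off the bounds. You are actually somewhat more careful than the paper in two places: you verify $1<r<2$ for non-forests via a girth argument (the paper just asserts it), and you handle the forest case $r=1$ explicitly via the trivial bound $f(n,H)\le n-1$, whereas the paper's proof simply states ``since $H$ is not a forest'' even though that hypothesis does not appear in the corollary's statement. Your observation that the lower bound needs only Lemma~\ref{LemGenN} and hence is immune to the forest/non-forest dichotomy, and your check that $a=b=r\in(1,2)$ automatically satisfies the arithmetic condition of Theorem~\ref{main}, are both exactly right.
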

In particular, Corollary \ref{cor}  applies to the following graphs $H$:     a complete bipartite graph  with parts of sizes $s$ and $t$: $H=K_{s,t},$    where $2=s\leq t$, or  $s=t=3$, or  $t>(s-1)!$, a cycle of length $4, 6,$ or  $ 10$. See a survey by F\"uredi and Simonovits~\cite{FS}. In addition, Corollary \ref{cor} is applicable  for  $K^k_{s,t}$, $s,k\geq 2$, $t$ sufficiently large, where  $F^k$ denotes a graph obtained from $F$ by replacing its edges with internally vertex-disjoint paths of length $k$, see Janzer \cite{J},  Conlon et al. \cite{C}, Jiang and Qiu \cite{JQ, JQ-2}; as well as several other graph classes, see Jiang et al. \cite{JMY}. 

In the case where $H=K_{2,t}$, we are able to eliminate the logarithmic term via an explicit construction using finite fields. 
\begin{thm}\label{K2t} 
	For each integer $t\geq 2$, there is a positive constant $C=C(t)$ such that for any sufficiently large $n$, $Cn^{1/3} \leq f(n, K_{2,t}) \leq (2+o(1))n^{1/3}$. In particular, $(1-o(1)) n^{1/3} \leq f(n, C_4)\leq (2+o(1)) n^{1/3}$.
\end{thm}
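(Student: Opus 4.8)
The plan: get the lower bound from a K\H{o}v\'ari--S\'os--Tur\'an count, and the upper bound from an explicit finite-field construction.

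\textbf{Lower bound.} An $(n,k)$-graph has exactly $\binom n2$ edges and at most $nk$ vertices, so if it is $K_{2,t}$-free then $\binom n2\le\ex(nk,K_{2,t})$. Plugging in $\ex(m,K_{2,t})\le\frac12\bigl(1+\sqrt{4(t-1)(m-1)+1}\bigr)\frac m2=\bigl(\frac12\sqrt{t-1}+o(1)\bigr)m^{3/2}$ with $m=nk$ gives $\frac12n(n-1)\le\bigl(\frac12\sqrt{t-1}+o(1)\bigr)(nk)^{3/2}$, hence $k\ge\bigl((t-1)^{-1/3}-o(1)\bigr)n^{1/3}$. So $C(t)=\frac12(t-1)^{-1/3}$ works for all large $n$; for $t=2$ this reads $f(n,C_4)\ge(1-o(1))n^{1/3}$.

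\textbf{Upper bound, the construction.} Since a $C_4$-free $(n,k)$-graph is $K_{2,t}$-free for every $t\ge2$ (so $f(n,K_{2,t})\le f(n,C_4)$), it suffices to build a $C_4$-free $(n,k)$-graph with $k=(2+o(1))n^{1/3}$. Choose a prime power $q=(1+o(1))n^{1/3}$ with $q^3\ge n$, identify $V(K_n)$ with an $n$-element subset of $\mathbb F_{q^3}$ carrying a fixed total order $<$, and let each blob $B_i$ consist of $q$ \emph{low} vertices $\ell^i_a$ and $q$ \emph{high} vertices $h^i_b$ ($a,b\in\mathbb F_q$), so $k=2q$. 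For $i<j$ let the unique $B_i$--$B_j$ edge join $\ell^i_{\phi(i,j)}$ to $h^j_{\psi(i,j)}$, where $\phi,\psi\colon\mathbb F_{q^3}\times\mathbb F_{q^3}\to\mathbb F_q$ are explicit maps built from the trace $\operatorname{Tr}\colon\mathbb F_{q^3}\to\mathbb F_q$ (say $\phi(i,j)=\operatorname{Tr}(ij)$ and $\psi(i,j)=\operatorname{Tr}(\rho(i)j)$ for a carefully chosen power map $\rho$, with the precise $\rho$ dictated by the last step). One checks immediately that this is an $(n,2q)$-graph, and that it is bipartite with the low vertices and the high vertices as the two parts, every edge joining a low vertex of some $B_i$ to a high vertex of some $B_j$ with $i<j$.

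\textbf{Why it is $C_4$-free, and the main obstacle.} A $C_4$, being bipartite, has two low vertices in blobs $i_1\ne i_2$ and two high vertices in blobs $j_1\ne j_2$; since a low vertex is adjacent only to high vertices in strictly larger blobs, all four blobs are distinct and $\max(i_1,i_2)<\min(j_1,j_2)$. Reading off the four adjacencies then forces $\phi(i_1,j_1)=\phi(i_1,j_2)$, $\phi(i_2,j_1)=\phi(i_2,j_2)$, $\psi(i_1,j_1)=\psi(i_2,j_1)$, $\psi(i_1,j_2)=\psi(i_2,j_2)$. Substituting the trace formulas and using non-degeneracy of the trace form, this system collapses to the statement that a certain element determined by $\{i_1,i_2\}$ lies in $\operatorname{span}_{\mathbb F_q}\{i_1,i_2\}$, and the map $\rho$ is engineered so this never happens for any two distinct $i_1,i_2$. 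This last point is the crux: one must rule out all low-degree "accidental" solutions (the naive $\rho(i)=i^2$ already fails, because the involution $i\mapsto-i$ makes $\psi(i_1,\cdot)\equiv\psi(i_2,\cdot)$ whenever $i_2=-i_1$) and handle the $\mathbb F_q$-linearly dependent pairs $i_2=\lambda i_1$ separately. Granting this, $k=2q=(2+o(1))n^{1/3}$, which together with the lower bound proves Theorem~\ref{K2t}; the $C_4$ statement is the case $t=2$, the sharper constant $1-o(1)$ coming from the $t=2$ instance of the K\H{o}v\'ari--S\'os--Tur\'an bound. (An alternative with the difficulty in the same place: take a near-extremal $C_4$-free host on $(2+o(1))n^{4/3}$ vertices --- a polarity graph of $\mathrm{PG}(2,q')$ or the parabola graph on $\mathbb F_{q'}\times\mathbb F_{q'}$ --- split it into $n$ algebraically defined blobs of size $(2+o(1))n^{1/3}$, prove every two blobs share an edge, and keep one edge per pair; the factor $2$ is precisely the slack that makes "every pair is joined" provable unconditionally, whereas a random split only gives it with the extra $\log^{1/3}n$ factor of Corollary~\ref{cor}.)
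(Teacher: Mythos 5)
Your lower bound matches the paper's approach (Lemma~\ref{LemGenN} plus the K\H{o}v\'ari--S\'os--Tur\'an estimate for $\ex(m,K_{2,t})$) and is correct; in fact your constant $((t-1)^{-1/3}-o(1))$ is a touch sharper than the paper's quoted $(t^{-1/3}-o(1))$.

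The upper bound is where there is a genuine gap, and it is the one you flag yourself. Your primary construction over $\mathbb F_{q^3}$ with $\phi(i,j)=\operatorname{Tr}(ij)$, $\psi(i,j)=\operatorname{Tr}(\rho(i)j)$ correctly reduces $C_4$-freeness, via non-degeneracy of the trace pairing, to the statement that $\rho(i_1)-\rho(i_2)\notin\operatorname{span}_{\mathbb F_q}\{i_1,i_2\}$ for all admissible distinct $i_1,i_2$. But you never exhibit such a $\rho$: you note that $\rho(i)=i^2$ fails on antipodal pairs and on $\mathbb F_q$-dependent pairs, and then write ``Granting this.'' Without a concrete $\rho$ and a verification, the construction does not stand, so the upper bound is unproved as written. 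The parenthetical alternative you sketch at the end — split a near-extremal $C_4$-free host on $(2+o(1))n^{4/3}$ vertices into $n$ algebraically defined blobs of size $(2+o(1))n^{1/3}$ and prove every pair of blobs shares an edge — is essentially what the paper actually carries out, but you do not do so.

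For comparison, the paper's construction (Lemma~\ref{C4}) takes the bipartite point--line incidence graph of the classical affine plane over $\mathbb F_q$ with $q=p^2$ (only non-vertical lines), where $C_4$-freeness is immediate since two distinct lines meet in at most one point, so the entire ``make it $C_4$-free'' burden you struggled with disappears. The real work goes into the blob partition: points are grouped as $\cP_{x,h}=\{(x,y):y\in A+h\}$ using a fixed subgroup $H\le(\mathbb F_q,+)$ of order $p$ and a transversal $A$ of $H$, lines are grouped as $\cL_{m,a}=\{y=mx+b:b\in a+H\}$, and one point-blob is paired with one line-blob to form each $V_i$ of size $2p$. That every $\cP_{x,h}$ meets every $\cL_{m,a}$ reduces to the elementary identity $\{a_i-h_j : a_i\in A, h_j\in H\}=\mathbb F_q$, giving an $(p^3,2p)$-graph and hence $k=(2+o(1))n^{1/3}$ after a prime-gap argument. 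So the paper moves the difficulty from ``engineer $\rho$ so no bad algebraic coincidence occurs'' (the hard part in your approach, unresolved) to ``every pair of blobs is joined'' (a short coset computation), which is exactly the trade-off your closing remark anticipates but does not execute.
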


For trees, we have bounds that differ by a multiplicative constant of $4$.  
\begin{thm}\label{tree}
	Let $T$ be a tree on $t$ edges, $t\geq 2$. Then $\bigl\lceil\frac{n-1}{2(t-1)}\bigr\rceil \leq f(n,T) \leq \bigl\lceil\frac{2n}{t-1}\bigr\rceil$. Moreover, if $T$ is a star, then $\bigl\lceil\frac{n-1}{t-1}\bigr\rceil \leq f(n ,T) \leq \bigl\lceil\frac{n}{t-1}\bigr\rceil$. Hence if $T$ is a star, then $f(n,T)=\bigl\lceil\frac{n-1}{t-1}\bigr\rceil$ unless $t-1$ divides $n-1$. 
 \end{thm}

Section~\ref{sec:lemmas} contains general observations. Sections~\ref{sec:main},~\ref{sec:K2t}, and~\ref{sec:tree} contain the proofs of Theorem~\ref{main}, Theorem~\ref{K2t}, and Theorem~\ref{tree}, respectively. We discuss open problems in Section~\ref{sec:conc}. 

\section{General observations and auxiliary lemmas}
\label{sec:lemmas}

Let ${\rm r}(H; k)$ denote the $k$-color Ramsey number for $H$; that is, the smallest integer $n$ such that any coloring of the edges of $K_n$ in $k$ colors contains a monochromatic subgraph isomorphic to $H$.
For a graph $F$, we denote its vertex set and edge set by $V(F)$ and $E(F)$, respectively, and the maximum degree of $F$ by $\Delta(F)$. Let $H$ be a graph and $n \geq |V(H)|$ be an integer. If a graph $F$ contains no subgraph isomorphic to $H$, we say that $F$ is {\it $H$-free}. \\
	
\begin{lem}\label{LemGenN} 
	Let $H$ be a graph. If there exists an $(n,k)$-graph that is  $H$-free, then $\binom{n}{2} \leq  \ex (nk, H)$. Moreover there is an $H$-free $(n,k)$-graph  such that $n = {\rm r}(H; k)-1$.
\end{lem}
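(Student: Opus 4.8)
For the first assertion I would simply compare edge and vertex counts. By definition an $(n,k)$-graph $G$ is $n$-partite and has exactly one edge between each of the $\binom{n}{2}$ pairs of blobs, so $|E(G)| = \binom{n}{2}$, while $|V(G)| \le nk$ since each of the $n$ blobs has at most $k$ vertices. Adding isolated vertices to $G$ changes neither its number of edges nor the fact that it is $H$-free, so we may assume $|V(G)| = nk$; then the definition of the extremal function gives $\binom{n}{2} = |E(G)| \le \ex(nk, H)$.

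For the ``moreover'' part the plan is to convert a Ramsey colouring into an $(n,k)$-graph. Put $n = {\rm r}(H;k) - 1$. By the definition of the $k$-colour Ramsey number there is a colouring $\phi \colon E(K_n) \to [k]$ with no monochromatic copy of $H$. I would then take $G$ to have vertex set $[n] \times [k]$ with blobs $B_i = \{i\} \times [k]$ for $i \in [n]$, and for each pair $i < j$ join $(i,\phi(ij))$ to $(j,\phi(ij))$ by one edge. Because $\phi$ assigns exactly one colour to each edge of $K_n$, there is exactly one edge between any two blobs; $G$ is $n$-partite with respect to $B_1,\dots,B_n$, and every blob has size $k$, so $G$ is an $(n,k)$-graph with $n = {\rm r}(H;k)-1$.

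It remains to see that $G$ is $H$-free, and the key observation is that every edge of $G$ joins two vertices with the same second coordinate. Thus, writing $L_c = [n]\times\{c\}$, the graph $G$ is the vertex-disjoint union of the $k$ ``colour layers'' $G[L_c]$, and each $G[L_c]$ is isomorphic to the graph on $[n]$ formed by the colour-$c$ edges of $\phi$, hence is $H$-free because $\phi$ has no monochromatic $H$. Since no edge of $G$ runs between distinct layers, every connected subgraph of $G$ lies inside a single layer; in particular $G$ contains no copy of $H$ (here one uses that $H$ is connected, which holds for every forbidden graph considered in this paper).

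I do not expect a genuine obstacle here: the lemma is a warm-up. The only steps needing a little care are checking that the constructed graph literally fits the definition of an $(n,k)$-graph --- the ``exactly one edge between blobs'' requirement is precisely the statement that $\phi$ is a well-defined function on $E(K_n)$ --- and the small connectivity remark used to rule out copies of $H$ spread across several colour layers.
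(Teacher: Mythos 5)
Your proof is essentially the same as the paper's: the first part is the edge-count observation, and the second part takes a Ramsey colouring of $K_n$ with $n = {\rm r}(H;k)-1$ and builds the $(n,k)$-graph as a vertex-disjoint union of $k$ copies of the colour classes. Two small points where you are actually \emph{more} careful than the paper. First, you noticed that the definition only gives $|V(G)|\le nk$ and padded with isolated vertices to invoke $\ex(nk,H)$ legitimately; the paper silently writes $|V(G)|=nk$. Second, and more substantively, you flagged that the step ``each colour layer is $H$-free, and layers are vertex-disjoint, therefore $G$ is $H$-free'' requires $H$ to be connected: a disconnected $H$ could be realised with components in different layers. The paper's proof simply asserts this implication without comment, so the lemma as stated (``Let $H$ be a graph'') is not fully justified by the paper's own argument when $H$ is disconnected. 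For the graphs the paper actually forbids (bipartite non-forests, $K_{2,t}$, trees) this is harmless, but it is a real hidden hypothesis, and you were right to name it.
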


\begin{proof}
	Consider an  $(n,k)$-graph  $G$ that is $H$-free. Then  $|V(G)|= nk$,  $|E(G)| = \binom{n}{2}$,  and $|E(G)|\leq \ex (nk, H)$. So, the first claim of the lemma follows. 
			
	Let $n = {\rm r}(H;k)-1$ and fix a $k$-edge-coloring $c$ of $K_n$ on a vertex set $[n]$ with a color set $[k]$ that has no monochromatic copy of $H$. 
	We construct  an $(n,k)$-graph $G$ by  taking $k$ pairwise vertex disjoint isomorphic copies of the color classes of $c$.  Formally,  for each vertex $i$ of $K_n$ let the $i^{\rm th}$ part of $G$,  have vertices $v_i^1, v_i^2, \ldots, v_i^k$, $i\in [n]$.  For the $i^{\rm th}$ and $j^{\rm th}$ parts of $G$, we let the edge between them be $v_i^\ell v_j^\ell$, where   $\ell = c(ij)$,  $i,j\in [n]$, $i\neq j$. 
	Thus $G$ is a vertex-disjoint union of $k$ graphs, each of which is isomorphic to a different color class in $c$. Since each color class of $c$ is $H$-free,  $G$ is  also $H$-free.
\end{proof}
		
	We shall need the following lemma for our main result. It shows that for any bipartite graph $H$ and any sufficiently large $n$  there is an $n$-vertex graph that is $H$-free, has almost as many edges as in an extremal for $H$ graph, and has maximum degree not much higher as the average degree.

\begin{lem}\label{max-degree}
	Let $H$ be a bipartite graph and let $b$, $1<b<2$, be an upper Tur\'an exponent for $H$; that is, there is a constant $C'$ such that $\ex (\ell, H) \leq C'\ell^b$ for any $\ell$ sufficiently large.
	Then there exists an integer $\ell_0$ such that for any $\ell>\ell_0$ there is an $H$-free graph $G'$ on $\ell$ vertices with at least $\ex (\ell, H)/2$ edges and maximum degree at most $q\cdot \ex (\ell, H)/(2\ell)$, where $q = \max\left\{3,\left\lfloor C\left(\ell^b/\ex (\ell, H)\right)^{1/(b-1)}\right\rfloor\right\}$ and $C = (C')^{1/(b-1)} 2^{(b+1)/(b-1)+1}$. 
	
	In particular, when $\ex (\ell, H) = \Theta(\ell^b)$,  there is an $H$-free graph on $\ell$ vertices, $\ex (\ell,H)/2$ edges, and maximum degree at most a constant factor of the average degree.
\end{lem}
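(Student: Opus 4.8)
The natural approach is to start from an extremal $H$-free graph on $\ell$ vertices and delete vertices of large degree until the maximum degree falls to $D:=q\cdot\ex(\ell,H)/(2\ell)$, using the Tur\'an upper bound $\ex(\ell,H)\le C'\ell^{b}$ to argue that few edges are lost. Write $e:=\ex(\ell,H)$. I would first record the arithmetic built into the constants: since $e\le C'\ell^{b}$ we have $C\bigl(\ell^{b}/e\bigr)^{1/(b-1)}\ge C(C')^{-1/(b-1)}=2^{2b/(b-1)}>16$, so $q\ge 16$ (the ``$\max$ with $3$'' is never active) and $D\ge 8e/\ell$, i.e.\ $D$ is at least four times the average degree of any $\ell$-vertex graph with $e$ edges; moreover a one-line computation with the explicit value of $C$ shows $C'(e/D)^{b}\le e/4$, which is exactly the inequality the choice of $C$ is designed to guarantee. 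It also suffices to produce an $H$-free graph on at most $\ell$ vertices with at least $e/2$ edges and maximum degree at most $D$, since isolated vertices can be appended afterwards.

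Next I would take an $H$-free graph $G=G_{0}$ on $\ell$ vertices with $e$ edges and set $G_{j+1}=G_{j}-v_{j}$, where $v_{j}$ is a vertex of maximum degree in $G_{j}$, stopping at the first index $T$ with $\Delta(G_{T})\le D$; let $W=\{v_{0},\dots,v_{T-1}\}$. Each of the first $T$ deletions destroys more than $D$ edges and $e(G_{T})\ge 0$, so $TD<e$ and hence $T<e/D$. The number of deleted edges equals $e(G[W])+e_{G}(W,V(G)\setminus W)$. The first term is easy: $G[W]$ is an $H$-free graph on $T<e/D$ vertices, so $e(G[W])\le\ex(T,H)\le C'T^{b}<C'(e/D)^{b}\le e/4$ by the inequality above.

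The crux — and what I expect to be the main obstacle — is to bound $e_{G}(W,V(G)\setminus W)$ by $e/4$ as well; the crude estimates $|W|\le 2e/D$, $\deg_{G}(v)\le\ell-1$, and so on, are too weak, because a priori most edges of $G$ could be incident to a small set of high-degree vertices. One clean way around this is to choose $G$ more carefully at the outset: let $G$ consist of an extremal $H$-free graph on $m:=\lceil 0.71\,\ell\rceil$ vertices together with $\ell-m=\Omega(\ell)$ isolated vertices; by the averaging bound $\ex(m,H)\ge\frac{\binom{m}{2}}{\binom{\ell}{2}}\ex(\ell,H)$ we still have $e(G)\ge e/2$ for large $\ell$. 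Then, instead of deleting a vertex $v$ of degree $d_{v}>D$, redistribute its surplus by a \emph{partial split}: keep $D$ of its edges at $v$ and move the remaining $d_{v}-D$ edges onto fresh isolated vertices, at most $D$ edges per fresh vertex. No vertex of degree exceeding $D$ is ever created, and since the total surplus $\sum_{v:\deg_{G}(v)>D}(\deg_{G}(v)-D)$ is at most $2e(G)\le 2e$ while $\Omega(\ell)\ge 2e/D$ isolated vertices are available (here $q\ge 16$ is used), the redistribution can be completed. The delicate point to verify is that a partial split preserves $H$-freeness for \emph{every} bipartite $H$: a vertex split is automatically safe only when any two non-adjacent vertices of $H$ have a common neighbour (as for $K_{s,t}$), so in general one must argue carefully about which edges are grouped on each fresh vertex, and possibly adapt the operation.

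Once the number of modified edges is shown to be at most $e/2$, the resulting graph is $H$-free, has at least $\ex(\ell,H)/2$ edges, and maximum degree at most $D$; padding with isolated vertices yields the required $\ell$-vertex graph. The ``in particular'' statement is then immediate: if $\ex(\ell,H)=\Theta(\ell^{b})$ then $\ell^{b}/\ex(\ell,H)=\Theta(1)$, so $q=\Theta(1)$ and $D=q\,\ex(\ell,H)/(2\ell)$ is a constant multiple of the average degree $2\ex(\ell,H)/\ell$.
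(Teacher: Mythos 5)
You have correctly identified the two obstacles, but neither is resolved, so the proposal has a genuine gap. The iterative-deletion idea founders on the cross-edges $e_G(W,V\setminus W)$, as you note. The proposed workaround (partial splits onto fresh vertices) is not sound for general bipartite $H$: splitting a vertex $v$ into non-adjacent copies $v_1,v_2$ with disjoint neighbourhoods does \emph{not} preserve $H$-freeness, because an embedding of $H$ in the new graph may use both $v_1$ and $v_2$ for two non-adjacent vertices of $H$; this only pulls back to a homomorphism (not an injection) into the original graph. Your own caveat — that safety requires every pair of non-adjacent vertices of $H$ to share a neighbour — is exactly right, and that property fails already for $C_6$. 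Saying one ``must argue carefully'' and ``possibly adapt the operation'' leaves the central difficulty unaddressed, so the proof does not close.

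The paper avoids the cross-edge problem entirely by a different decomposition. It fixes $q$, partitions $V(G)$ into $q$ nearly equal parts with $A_1$ being the $\lceil \ell/q\rceil$ highest-degree vertices, and splits into two cases on whether $\sum_{x\in A_1} d(x)\ge m/2$. If yes, averaging gives a part $A_j$ with $|E(G[A_1\cup A_j])|\ge m/(2(q-1))$, while the Tur\'an bound on $2\lceil\ell/q\rceil$ vertices gives the opposite inequality once $q$ is as large as the lemma's choice; this contradiction rules Case~1 out. If no, then every vertex outside $A_1$ already has degree at most $qm/(2\ell)$, and $G[V\setminus A_1]$ retains more than $m/2$ edges because $|E(G)|-\sum_{x\in A_1} d(x) > m/2$ — no cross-edge accounting is needed. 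Padding with $|A_1|$ isolated vertices finishes. This ``contradict or delete one fixed block'' structure is the idea your proposal is missing: rather than trying to salvage edges lost to high-degree vertices, one shows that the high-degree vertices cannot jointly carry half the edges in the first place.
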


\begin{proof}
	We closely follow  the proof idea  of a similar result by Erd\H{o}s and Simonovits \cite[Theorem 1]{ES}. 
	Let $G$ be an extremal graph for $H$ on $\ell$ vertices, i.e., $G$ is an $H$-free graph with $\ex (\ell,H)$ edges. 
	Let $m=|E(G)|=\ex (\ell,H)$. 

	Split the  vertex set of $G$ into $q$, $q\geq 3$ parts $A_1, \ldots, A_q$ of nearly equal sizes such that each vertex in $A_1$ has degree at least as large as the degree of any vertex in  $V(G)-A_1$, i.e., $A_1$ is a set of $\lceil \ell/q\rceil$ vertices of the largest degrees. \\

	\noindent\textbf{Case 1.} 	$\sum_{x\in A_1}  d(x) \geq m/2$.\\
	The number of edges from $A_1$ to $V(G)-A_1$ is $\sum_{x\in A_1}  d(x)  -    2|E(G[A_1])|$.
	Then there is an index $j\in \{2, \ldots, q\}$ such that  the number of edges from $A_1$ to $A_j$  is at least 
	\begin{align*}
		\frac{1}{q-1}\left(\sum_{x\in A_1}  d(x) - 2 \left|E(G[A_1])\right|\right) .
	\end{align*}
	Thus
	\begin{eqnarray*}
		\left|E(G[A_1\cup A_j])\right|  &\geq &  \frac{\sum_{x\in A_1}  d(x)}{q-1}   - \frac{2|E(G[A_1])|}{q-1} + |E(G[A_1])| \\
		&\geq& \frac{m}{2(q-1)} + \frac{q-3}{q-1} |E(G[A_1])| \\
		&\geq& \frac{m}{2(q-1)} .
	\end{eqnarray*}
	On the other hand, if $\ell/q$ (hence, $\ell$) is sufficiently large, then 
 	\begin{align*}
		\left|E(G[A_1\cup A_j])\right| \leq \ex\left(2\lceil\ell/q\rceil,H\right) \leq C'\left(2\lceil\ell/q\rceil\right)^b < C' \frac{2^b \ell^b}{q^{b-1} (q-1)} .
	\end{align*}
 	Putting this all together gives
	\begin{align*}
		\frac{m}{2(q-1)} < C' \frac{2^b \ell^b}{q^{b-1} (q-1)} . 
	\end{align*}
	Solving for $q$ gives $q^{b-1} < C' 2^{b+1} (\ell^b/m)$, 
	a contradiction to our choice of $q$. \\
 
	\noindent\textbf{Case 2.} $\sum_{x\in A_1}  d(x) <m/2$. \\
	In particular there is a vertex in $A_1$ of degree at most $m/(2|A_1|) \leq qm/(2\ell)$.
	This implies that each vertex in $V(G)-A_1$ has degree at most $qm/(2\ell)$.
	In addition,  $E(G[V-A_1]) \geq  |E(G)| - \sum_{x\in A_1}  d(x) > m - m/2 = m/2$.
	The graph that is the union of $G[V-A_1]$ and $|A_1|$ isolated vertices satisfies the conditions of the lemma.
\end{proof}~\\

\section{Proof of Theorem \ref{main}}
\label{sec:main}

We shall obtain the desired $(n,k)$-graph by taking a nicely behaving $H$-free graph and randomly partitioning its vertex set.
For this, we need some definitions. 
	
We say that two random variables $X,Y$ are \textit{positively correlated} if $\E[XY]\geq\E[X]\E[Y]$ and a set of random variables are positively correlated if they are pairwise positively correlated. We shall apply a concentration inequality by Janson~\cite[Theorem 5]{Janson} which is one of a few variations on an inequality due originally to Suen~\cite{Suen} (see also~\cite[Theorem 8.7.1]{AlonSpencer}). We provide Janson's setup below:
	
\newtheorem{setup}[thm]{Setup}
\begin{setup}
	The probabilistic space for Theorem~\ref{thm:Janson} is as follows:
	\begin{itemize}
		\item 	$\{I_i\}_{i\in\mathcal{I}}$ is a finite family of indicator random variables defined on a common probability space.
		\item 	$\Gamma$ is a \textit{dependency graph} for $\{I_i\}_{i\in\mathcal{I}}$, i.e., a graph with vertex set $\mathcal{I}$ such that if $A$ and $B$ are two disjoint subsets of $\mathcal{I}$, and $\Gamma$ contains no edge between $A$ and $B$, then the families $\{I_i\}_{i\in A}$ and $\{I_i\}_{i\in B}$ are independent.
		\item 	$S = \sum_i I_i$. In particular $S=0$ if and only if all $I_i=0$. The expression $\Pr(S=0)$ in Theorem~\ref{thm:Janson} may thus be regarded as shorthand for $\Pr(\wedge_i \{I_i=0\})$. 
		\item 	$i\sim j$, where $i,j\in\mathcal{I}$, if there is an edge in $\Gamma$ between $i$ and $j$. (In particular, $i\not\sim i$.)
		\item 	$p_i = \Pr(I_i=1) = \E[I_i]$. Thus, $\Pr(I_i=0) = 1-p_i$.
		\item 	$\mu = \E[S] = \sum_i p_i$.
		\item 	$D = \sum_{\{i,j\} : i\sim j} \E[I_iI_j]$, summing over \textit{unordered pairs} $\{i,j\}$, i.e., over the edges in $\Gamma$. As a sum over ordered pairs, we have $D = \frac{1}{2} \sum_{i\in\mathcal{I}} \sum_{j\sim i} \E[I_iI_j]$. 
	\end{itemize}\label{setup:Janson}
\end{setup}
	
\begin{thm}[Janson~\cite{Janson}]\label{thm:Janson}
	Given the conditions in Setup~\ref{setup:Janson}, if the variables $\{I_i\}$ are positively correlated, then
	\begin{align*}
		\Pr(S=0) 	\leq 	\exp\left\{-\min\left\{\frac{\mu^2}{48D}, \frac{\mu}{4}\right\}\right\} .
	\end{align*}
\end{thm}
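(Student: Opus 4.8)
The plan is to prove the theorem in two stages. First, establish a multiplicative ``Suen-type'' estimate, namely that for \emph{every} positively correlated family with dependency graph $\Gamma$ one has $\Pr(S=0)\le\exp(-\mu+12D)$; second, upgrade this to the stated bound by a short subsampling-and-optimization argument. Throughout one may assume every $p_i<1$ and that all the conditioning events below have positive probability, since otherwise $\Pr(S=0)=0$ and there is nothing to prove; the case $D=0$ is the usual Janson/Lovász bound and $\mu=0$ is trivial.

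For the first stage the idea is to compare $\Pr(S=0)=\E\big[\prod_{i\in\mathcal I}(1-I_i)\big]$ with the ``independent'' value $\prod_i(1-p_i)$. Setting $Y_i=(I_i-p_i)/(1-p_i)$, so that $\E[Y_i]=0$ and $1-I_i=(1-p_i)(1-Y_i)$, gives $\Pr(S=0)=\prod_i(1-p_i)\cdot\E\big[\prod_i(1-Y_i)\big]$, and since $1-p_i\le e^{-p_i}$ it suffices to show $\E\big[\prod_i(1-Y_i)\big]\le e^{12D}$. Expanding the product as $\sum_{R\subseteq\mathcal I}(-1)^{|R|}\E\big[\prod_{i\in R}Y_i\big]$ and using the dependency-graph property, any set $R$ containing a vertex isolated in $\Gamma[R]$ contributes $0$ (that variable is independent of the rest and has mean zero), and for the surviving sets $\Gamma[R]$ has minimum degree at least $1$, so $\E\big[\prod_{i\in R}Y_i\big]$ factors over the connected components of $\Gamma[R]$, each of size $\ge 2$. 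The positive-correlation hypothesis enters precisely here: it forces $\E[Y_iY_j]=(\E[I_iI_j]-p_ip_j)/((1-p_i)(1-p_j))\ge0$ for every edge $ij$ of $\Gamma$, so the dominant correction is the edge sum, which is comparable to $\sum_{i\sim j}\E[I_iI_j]=D$. The remaining work is to bound the whole alternating sum over connected subgraphs of $\Gamma$ by $e^{12D}$; this is a routine but delicate counting estimate, and it is where the constant is pinned down (the value $12$ is chosen so that the constants $48$ and $4$ emerge in the second stage).

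For the second stage, fix $q=\min\{1,\mu/(24D)\}$ and form a random subset $\mathcal J\subseteq\mathcal I$ by including each index independently with probability $q$. Writing $\mu_{\mathcal J}=\sum_{i\in\mathcal J}p_i$ and $D_{\mathcal J}=\sum_{\{i,j\}\subseteq\mathcal J,\,i\sim j}\E[I_iI_j]$, linearity gives $\E_{\mathcal J}[\mu_{\mathcal J}-12D_{\mathcal J}]=q\mu-12q^2D$, so some realization $\mathcal J$ satisfies $\mu_{\mathcal J}-12D_{\mathcal J}\ge q\mu-12q^2D$. The subfamily $\{I_i\}_{i\in\mathcal J}$ still has $\Gamma[\mathcal J]$ as a dependency graph and is still positively correlated, so the first-stage estimate applies and yields $\Pr(S=0)\le\Pr\big(\bigwedge_{i\in\mathcal J}\{I_i=0\}\big)\le\exp(-\mu_{\mathcal J}+12D_{\mathcal J})\le\exp(-(q\mu-12q^2D))$. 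If $24D\le\mu$ then $q=1$ and the exponent is $-\mu+12D\le-\mu/2\le-\mu/4$; if $24D>\mu$ then $q=\mu/(24D)$ and the exponent is $-\mu^2/(24D)+\mu^2/(48D)=-\mu^2/(48D)$. In either case $\Pr(S=0)\le\exp(-\min\{\mu^2/(48D),\mu/4\})$, which is the claim.

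The main obstacle is the first stage. A naive telescoping $\Pr(S=0)=\prod_i\Pr(I_i=0\mid I_1=\cdots=I_{i-1}=0)$ combined with a union bound over back-neighbours only produces the \emph{additive} estimate $\Pr(S=0)\le\prod_i(1-p_i)+D$, which is too weak to survive the subsampling step. Reaching the multiplicative form $e^{-\mu+12D}$ requires using the sign information supplied by positive correlation throughout the connected-subgraph expansion, and the subtlety is that the events $\{I_i=0\}$ are only pairwise positively correlated, not associated, so joint non-occurrence probabilities cannot be split freely across $\Gamma$; the higher-order contributions from connected subgraphs of $\Gamma$ must be absorbed carefully into an exponential of $D$ with an explicit constant. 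Once that estimate is in hand, the optimization in the second stage is elementary and the constants $48$ and $4$ fall out.
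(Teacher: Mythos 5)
This theorem is not proved in the paper at all: it is quoted verbatim from Janson's paper (Theorem 5 of ``New versions of Suen's correlation inequality'') and used as a black box. So there is no internal proof to compare against; your attempt has to stand or fall on its own.

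Your Stage 2 is fine and, in fact, this kind of random thinning plus optimization over the retention probability $q$ is a standard way to convert an additive Suen-type exponent into the minimum of $\mu^2/D$ and $\mu$; the arithmetic with $q=\min\{1,\mu/(24D)\}$ checks out and does produce $48$ and $4$ from the constant $12$. The problem is Stage 1. Your claimed inequality $\Pr(S=0)\le\exp(-\mu+12D)$, which you require to hold uniformly for every positively correlated family with a dependency graph, is not a known bound and is almost certainly false in this generality. Every Suen-type estimate of this multiplicative form (Suen's original inequality, Alon--Spencer Theorem 8.7.1, Janson's Theorem 2) carries an extra factor of the shape $e^{c\delta}$ in front of the $D$ term, where $\delta=\max_i\sum_{j\sim i}p_j$; that factor is there precisely because the quantity $D$ alone records only pairwise interactions, and nothing in the hypotheses prevents higher-order dependencies from being large even when $D$ is small. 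Your expansion $\E\big[\prod_i(1-Y_i)\big]=\sum_R(-1)^{|R|}\E\big[\prod_{i\in R}Y_i\big]$ makes this visible: you correctly observe that positive correlation fixes the sign of the two-element terms, $\E[Y_iY_j]\ge 0$, but for $|R|\ge 3$ the moments $\E\big[\prod_{i\in R}Y_i\big]$ can have either sign and arbitrary magnitude. The sentence ``this is a routine but delicate counting estimate'' is exactly where the whole content of the theorem lives, and it is not routine: without a handle on $\delta$ (or an association/FKG hypothesis rather than mere pairwise positive correlation) the connected-subgraph sum simply does not admit an $e^{O(D)}$ bound.

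There is also a secondary issue with the interplay between the two stages. Even if you replace Stage 1 by the genuine Suen bound $\Pr(S=0)\le\exp(-\mu+2De^{2\delta})$ (note $D$ here is a sum over unordered pairs, so $\Delta=2D$ in the usual notation), the thinning step does not reduce $\delta$ deterministically: $\delta_{\mathcal J}$ is a maximum of random quantities whose means scale by $q$ but whose realizations need not, so you cannot simply pick the realization $\mathcal J$ that makes $\mu_{\mathcal J}-12D_{\mathcal J}$ large without also worrying about $\delta_{\mathcal J}$. Janson's actual argument confronts this (using the positive-correlation hypothesis to control conditional probabilities in a telescoping product rather than an inclusion--exclusion expansion), and that is what lets him state a bound with no $\delta$ in it. As written, your proposal has the right overall architecture but leaves the genuinely hard estimate unproved and very likely unprovable in the stated form.
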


Given Theorem~\ref{thm:Janson} we are ready for the main result.
	
\begin{proof}[Proof of Theorem \ref{main}]
\vskip 0.3cm
\noindent
	Lemma~\ref{LemGenN} gives that $\ex (nk,H) < \binom{n}{2}$ implies $f(n,H) \geq k$. 
	
	For the other direction, choose $n$ sufficiently large and let $k$ be such that $12 n^2 \log n \leq \ex (nk, H)$.
	We shall show that there is an $(n, k)$-graph that is $H$-free. 
	
	Let $N=nk$. Recall that $CN^a\leq {\rm ex }(N, H) \leq C'N^b$, for positive constants $C, C'$ and for exponents $a, b$ such that ~$1<a<b<2$. Because $b-a<(2-b)(b-1)/(5-b)$, it is the case that
	\begin{align}\label{eq:abound}
		\frac{2(b+1)}{5-b}<a<b<2 .
	\end{align}
	Let $G$ be an $N$-vertex, $H$-free graph on $M=\ex (N,H)/2$ edges with maximum degree $\Delta= \Delta(G)\leq q M/N$ as guaranteed by Lemma~\ref{max-degree}, with $q = C\left(N^b/\ex (N, H)\right)^{1/(b-1)}$.  We shall show that the  vertex set of $G$ can be partitioned into $n$ parts of sizes at most $k+o(k)$  such that there is an edge between any two parts.  This would demonstrate that there is an $(n, k+o(k))$-graph that is $H$-free. 
	
	Let  $E(G)=\{e_1,\ldots,e_M\}$. We color each vertex of $G$ with a color from $\{c_1,\ldots,c_n\}$ independently and uniformly at random. For $i, j \in [n]$, let $S_{i,j}$ be the event that no edge gets colors $c_i$ and $c_j$ on its end vertices. 
	We shall show that  none of $S_{i,j}$ happen with positive probability, i.e., that with positive probability there is an edge between any two distinct color classes.
	
We start with $S_{1,2}$ first, i.e., we consider the probability that some edge gets colors $c_1$ and $c_2$ on its endpoints.	Let $\mathcal{I}=\{1, 2, \ldots, M\}$. For $i\in \mathcal{I}$ we define $I_i$ and $p_i$. We also define $\Gamma, \sim, \mu,$ and $D$ and give their basic properties:
		\begin{itemize}
			\item 	$I_i$ is the indicator random variable for the event that the set of colors on the endvertices of $e_i$ is  $\{c_1, c_2\}$.
			\item 	$\Gamma$ is a graph whose vertices are the edges of $G$. For distinct $i$ and $j$, we write $i\sim j$ if and only if edges $e_i$ and $e_j$ share an endvertex.
			\item 	$\Pr(S_{1,2}=0) = \Pr(\wedge_i \{I_i=0\})$ is the event that no edge gets colors $c_1$ and $c_2$. 
			\item 	$i\sim j$, where $i,j\in\mathcal{I}$, if there is an edge in $\Gamma$ between $i$ and $j$.
			\item 	$p_i = \Pr(I_i=1) = \E[I_i] = 2/n^2$. Thus, $\Pr(I_i=0) = 1-p_i = 1-2/n^2$.
			\item 	$\mu = \E[S_{1,2}] = \sum_i p_i = 2M/n^2$.
			\item 	$D = \sum_{\{i,j\} : i\sim j} \E[I_iI_j] = (2/n^3) \sum_{u\in V(G)} \binom{\deg(u)}{2} \leq N \Delta^2/n^3$. 
		\end{itemize}
	Moreover, the indicator variables $\{I_i\}$ are positively correlated. 
	That is, for all $i,j\in\mathcal{I}$,
	\begin{align*}
		\left(\frac{2}{n^2}\right)^2 
		= 	\E[I_i]\E[I_j] 
		\leq 	\E[I_iI_j]
		= 	\begin{cases}
				\frac{2}{n^3}, 			& \text{ if }e_i\cap e_j\neq\emptyset; \\
				\left(\frac{2}{n^2}\right)^2, 	& \text{ if }e_i\cap e_j=\emptyset.
			\end{cases}
	\end{align*}	
	
	Therefore, using Theorem \ref{thm:Janson}, we have
		\begin{align}\label{S12}
			\Pr(S_{1,2}=0) 	\leq 	\exp\left(-\min\left\{\frac{\mu^2}{48D}, \frac{\mu}{4}\right\}\right).
		\end{align}
	We will show that each of $\mu^2/(48D)$ and $\mu/4$ is at least $3\log n$.

	Recall that $\ex (N, H) \geq 12 n^2 \log n$ and ${\rm ex }(N, H) =\Omega(N^a)$, thus $N= O(\ex(N, H)^{1/a})$.
	In the following calculation, recall that $\Delta= \Delta(G)\leq q M/N$ with $q = C\left(N^b/\ex (N, H)\right)^{1/(b-1)}$:
	\begin{align*}
		\frac{\mu^2}{48D} 	&\geq  	\frac{(2 M / n^2)^2}{48 (N \Delta^2 / n^3)} \\
						&= 		\frac{M^2}{12 n N \Delta^2} \\	
											& \geq 	\frac{N}{12 n q^2} \\
						& \geq 	\frac {N}{12 n C^2 \left(N^b/\ex (N, H)\right)^{2/(b-1)}} \\
						&= 	 	\frac{1}{12 n C^2} \cdot \frac{(\ex (N, H))^{2/(b-1)}} {N^{(b+1)/(b-1)}} .
	\end{align*}
	
	Since $N= O(\ex(N, H)^{1/a})$, there is a constant $C''$ such that
	\begin{align*}
		\frac{\mu^2}{48D}	& \geq   	C'' \frac{1}{n} \ex (N, H)^{2/(b-1) - (b+1)/(a(b-1)) } \\
						& \geq  	C'' \frac{1}{n} n^{4/(b-1) - 2(b+1)/(a(b-1))} \log^{c}n \\
						& =  		C'' n^{[(5-b)- 2(b+1)/a]/(b-1)} \log^{c}n,
					\end{align*}
	for a constant $c \in (1/2,1/b)$.
	The exponent of $n$ is equal to 
	\begin{align*}
		\frac{(5-b)- 2(b+1)/a}{b-1} , 
	\end{align*}
	which is positive 
	as ensured by~\eqref{eq:abound}. In this case, for large $n$, this term dominates the logarithmic term and we have indeed that
	\begin{align*}
		\frac{\mu^2}{48D} \geq 3 \log n .
	\end{align*}
	
	Now, we consider the second term in (\ref{S12}),  $\mu/4$: 
	\begin{align*}
		\frac{\mu}{4} 	= 	\frac{2M/n^2}{4} 	\geq  \frac{\ex (N, H)}{4n^2} \geq \frac{12 n^2 \log n}{4 n^2} =  3 \log n.
	\end{align*}

	Using the union bound for probability,
	\begin{align*}
		\Pr\left(\exists \{i,j\}\in \textstyle{\binom{[n]}{2}} : S_{i,j}=0\right) 	&\leq 	n^2 \Pr(S_{1,2}=0) \leq \exp\left(2\log n - \min\left\{\frac{\mu^2}{48D}, \frac{\mu}{4}\right\}\right) \\
														&\leq 	\exp\left(2\log n - 3\log n\right) \leq n^{-1} .
	\end{align*}
	
	As a result, we have shown that with probability $1-n^{-1}$ there is a partition of vertices of an  $(nk)$-vertex graph $G$ into $n$ parts   with an edge between each pair of parts. It remains to show that no part is too big. 
	
	The expected number of vertices in each blob is $k$. Note that we can assume that $k>n^{\delta}$ for some positive $\delta$.
	Fix some $i\in[n]$ and let $X=X_i$ be the number of vertices of color $c_i$.
	We use a Chernoff-type concentration bound, \cite{MU}:
	\begin{align*}
		\Pr\left(\left|X-\E[X]\right| > \epsilon \E[X]\right) \leq \exp(-\epsilon^2 \E[X]/3) = \exp( -\epsilon^2 k/3) .
	\end{align*}
	
	Let $\epsilon = k^{-1/2} \log n$. Then taking the union bound over all $n$ color classes, we see that the probability that some color class deviates from the expected value of $k$ by more than $\sqrt{k}\log n = o(k)$ is at most $1/n$. Thus with probability at least $1 - 1/n -1/n$ each color class is of size at most $k+o(k)$ and there is an edge between any two distinct color classes.
\end{proof}~\\

\begin{rem}\label{rem:logterm}
	There was no attempt to optimize the constants in this proof because they are subsumed by the $\log^{1/a}n$ term and we conjecture that the logarithmic term can be removed. That is, we conjecture that there exists a constant $c$ such that if $\ex(nk,H) > cn^2$, then $f(n,H)\leq k$. 
\end{rem}

\begin{proof}[Proof of Corollary \ref{cor} ]
If $\ex(\ell, H) = \Theta(\ell^r)$, then $r$ plays a role of $a$ and $b$ in Theorem \ref{main}. In particular the inequality $a> 2(b+1)/(5-b)$ is satisfied.
Note also that since $H$ is not a forest and $H$ is bipartite $1<r<2$. 

Assume that $\ex (nk,H) \geq C (nk)^r$ for a positive constant $C$ and $nk$ sufficiently large.
Let $k= (12/C)^{1/r}n^{2/r-1} \log^{\frac{1}{r}} n = C' n^{2/r-1} \log^{1/r} n$.
Then $\ex (nk,H) \geq C (nk)^r   \geq  12n^2 \log n$, and  by Theorem \ref{main}  we have  $f(n,H) \leq  C'n^{2/r-1} \log^{1/r} n (1+o(1))\leq  2C' n^{2/r-1} \log ^{1/r} n $. 

On the other hand,  if $\ex(\ell, H) \leq C_1 \ell^r$  for $C_1>1$  and   $k= (C_1/2)^{-1} n^{2/r-1}$, we have that   $\ex (nk, H) \leq \frac{1}{2}C_1 C_1^{-r}  n^2 < \binom{n}{2}$, then 
by Theorem \ref{main} $f(n, H) \geq k = (C_1/2)^{-1} n^{2/r-1} = C''  n^{2/r-1}$.
\end{proof}

\section{$C_4$-free construction with $k=2n^{1/3}$}
\label{sec:K2t}

\begin{lem} \label{C4}
	Let $p$ be a prime power. There is a $(p^3,2p)$-graph that is $C_4$-free.  
\end{lem}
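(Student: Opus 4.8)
The plan is to build the graph by hand out of the arithmetic of a finite field; an explicit object is exactly what is needed to avoid the logarithmic loss coming from the random colouring in Theorem~\ref{main}, and there are several near‑extremal $C_4$‑free graphs (polarity graphs, norm/Wenger graphs) whose algebra can be pressed into this form, the only real subtlety being to force \emph{exactly one} edge between each pair of blobs. Concretely I would identify the $p^3$ blobs with the elements of $\mathbb{F}_{p^3}$ (equivalently with $\mathbb{F}_p^3$), write $\mathrm{Tr}\colon\mathbb{F}_{p^3}\to\mathbb{F}_p$ for the trace and $\sigma\colon x\mapsto x^p$ for the Frobenius automorphism, and give the blob of $\alpha$ exactly $2p$ vertices indexed by $\mathbb{F}_p\times\{0,1\}$. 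For distinct $\alpha,\beta$ I would let the unique edge between their blobs join the vertex $\bigl(\mathrm{Tr}(\alpha\sigma(\beta)),\,b(\alpha,\beta)\bigr)$ of the blob of $\alpha$ to the vertex $\bigl(\mathrm{Tr}(\beta\sigma(\alpha)),\,b(\beta,\alpha)\bigr)$ of the blob of $\beta$, where $b(\cdot,\cdot)\in\{0,1\}$ is an explicit ``correction'' bit (for instance the value of a fixed quadratic character of $\mathbb{F}_{p^3}$ on $\alpha-\beta$). By construction there is precisely one edge between any two blobs and every blob has exactly $2p$ vertices, so this is a legitimate $(p^3,2p)$-graph, and all that remains is $C_4$-freeness.

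For that I would first record two structural facts valid in any $(n,k)$-graph: a $C_4$ must pass through four distinct blobs and its four edges are then forced to be the four prescribed blob-edges, and any two vertices of a single blob have no common neighbour. Thus a $C_4$ yields four distinct blobs $A,B,C,D$, with the cycle written $a$–$b$–$c$–$d$–$a$ ($a\in A$, etc.), satisfying $\phi_A(B)=\phi_A(D)$, $\phi_B(A)=\phi_B(C)$, $\phi_C(B)=\phi_C(D)$ and $\phi_D(A)=\phi_D(C)$, where $\phi_X(Y)$ denotes the vertex of blob $X$ on the edge to $Y$. I would then fix the ``opposite'' blobs $A\ne C$ and examine the set of blobs $\beta$ with $\phi_\beta(A)=\phi_\beta(C)$; substituting the formula and using that $\sigma$ is a bijection, this set is a hyperplane of $\mathbb{F}_{p^3}$ viewed as an $\mathbb{F}_p$-space, and on it the assignment $\beta\mapsto(\phi_A(\beta),\phi_C(\beta))$ is governed by an $\mathbb{F}_p$-linear map to $\mathbb{F}_p^2$, refined by the bit $b$. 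A $C_4$ through $A$ and $C$ is precisely a failure of injectivity of this refined assignment, and by non-degeneracy of the trace form such a failure amounts to the $\mathbb{F}_p$-linear dependence of three explicit field elements built from $A$, $C$ and $(A-C)^{p^2}$; the bit $b$ is arranged so that this dependence cannot occur. Hence the graph contains no $C_4$.

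The hard part is this last point. Without the bit, i.e. for the bare rule $\phi_\alpha(\beta)=\mathrm{Tr}(\alpha\sigma(\beta))$ with blobs of size $p$, the three field elements are linearly dependent for a thin but non-empty family of pairs $A,C$ — roughly those with $A-C$ in a small multiplicative subgroup of $\mathbb{F}_{p^3}^\ast$, or with $A\in\operatorname{span}_{\mathbb{F}_p}\bigl(A-C,(A-C)^{p^2}\bigr)$ — and each such pair really does support a $C_4$. The extra factor of $2$ in the blob size is exactly the slack needed to separate the colliding blobs by the value of $b$, and the genuine effort is in checking that one well-chosen bit handles every exceptional pair, together with the degenerate possibility that a candidate common-neighbour blob equals $A$ or $C$ (relevant especially for small $p$, where a non-injective fibre has size only $p$) and with any blob, such as $0\in\mathbb{F}_{p^3}$, on which the bare rule collapses to a single vertex. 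Once these finitely many cases are dispatched, one concludes $H\not\subseteq G$ for $H=C_4$ and the lemma follows.
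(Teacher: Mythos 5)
Your approach is genuinely different from the paper's, and unfortunately it is not a proof: the central verification is missing, and you say so yourself. You propose to place the $p^3$ blobs on $\mathbb{F}_{p^3}$ with blob‑vertices indexed by $\mathbb{F}_p\times\{0,1\}$, route edges via $\mathrm{Tr}(\alpha\sigma(\beta))$, and repair the (admitted) $C_4$'s in the bare rule with an unspecified ``correction bit'' $b$, suggested to be a quadratic character of $\alpha-\beta$. But you explicitly concede that ``the genuine effort is in checking that one well-chosen bit handles every exceptional pair,'' and then you do not do that checking, nor do you dispatch the degenerate cases you list (the blob $0$, a would-be common-neighbour blob equal to $A$ or $C$, small $p$). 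This is precisely the step on which the lemma lives or dies. It is also not clear the repair can work as stated: a single bit $b$ introduces at most a bounded number of extra labels per blob, so if for some exceptional pair $(A,C)$ the non-injective fibre of your linear map $\beta\mapsto(\phi_A(\beta),\phi_C(\beta))$ has size comparable to $p$ or $p^2$, a pigeonhole argument on the finitely many bit-patterns still produces two blobs $B,D$ with identical refined images, and a $C_4$ survives. Nothing in the writeup rules this out.

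The paper avoids this entirely by reversing the logic. Rather than constructing blobs first and then fighting to prove $C_4$-freeness, it starts from a graph that is $C_4$-free for free: the bipartite point-line incidence graph of the affine plane over $\mathbb{F}_{q}$ with $q=p^2$ (two lines meet in at most one point, hence no $C_4$). The only work is then to partition its $2q^2=2p^4$ vertices into $p^3$ parts of size $2p$ with an edge between every pair of parts. This is done by slicing the point classes $\mathcal{P}_x$ and parallel classes $\mathcal{L}_m$ using a subgroup $H$ of order $p$ of $(\mathbb{F}_q,+)$ and a transversal $A$ of its cosets, pairing up the resulting $p^3$ pieces of $\mathcal{P}$ with the $p^3$ pieces of $\mathcal{L}$; the existence of an edge between any point-piece and any line-piece reduces to the fact that $A+H=\mathbb{F}_q$. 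The $C_4$-freeness is inherited and needs no case analysis. If you want to keep your direct $\mathbb{F}_{p^3}$-construction, you would need to write down a concrete $b$ and actually prove the claimed injectivity on every fibre, including the listed degeneracies; as it stands the proof has a gap exactly where the difficulty is concentrated.
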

		
	\newcommand{\cP}{{\mathcal P}}
	\newcommand{\F}{\mathbb{F}}
	\newcommand{\Z}{\mathbb{Z}}
	
\begin{proof}
	We will construct the graph from what is known as the classical affine plane.  Let $q=p^2$.  In what follows,  the arithmetic is in $\mathbb{F}_q$. Let 		
	\begin{align*}
		\cP 	&= 	\left\{(x,y) : x \in \F_q; y \in \F_q\right\}  \text{ and }\\
		\cL 	&= 	\left\{\left\{(x, m x + b) : x \in \F_q\right\} : m \in \F_q; b \in \F_q\right\}.
	\end{align*}
	We refer to $\cP$ as the set of points and $\cL$ as the set of lines.
	For a line $\{\left\{(x, m x + b) : x \in \F_q\right\} $, we say that $m$ is its {\it slope} and $b$ is its {\it intercept}.
	Note that for any $m, b, b'\in \F_q$, $\{(x, m x + b) : x \in \F_q\} \cap \{(x, m x + b') : x \in \F_q \} = \emptyset$, thus lines with the same slope and distinct intercepts are disjoint. 		
	The set of all members of $\cL$ with the same slope is called a \textit{parallel class}. 
	For $m\in\F_q$, let the respective parallel class be denoted $\cL_m$, i.e., $\cL_m = \{\{m x + b : x\in\F_q\} : b\in\F_q\}$. 
	
	Let  $G=G(p)= (\cP,\cL;E)$  be a bipartite graph with parts $\cP$ and $\cL$, where $P\in\cP$ is adjacent to $L\in\cL$ if and only if $P\in L$. \\
		
\textbf{Claim 1.} {\it $G$ contains no copies of $C_4$.} \\
	\indent In order to prove this claim, it is sufficient to verify that  any distinct $L_1,L_2\in\cL$ have at most one common neighbor.
	Let $L_1,L_2$ have slopes $m_1,m_2$, respectively, and intercepts $b_1,b_2$, respectively. 
	Since the lines are distinct, $(m_1,b_1)\neq (m_2,b_2)$. 
	In the graph $G$, any common neighbor of $L_1$ and $L_2$ would be a pair $(x,y)$ such that $y=m_1 x + b_1=m_2 x+b_2$. Hence,
	\begin{align}
		(m_1 - m_2) x 	= 	b_2 - b_1. \label{eq:modulo}
	\end{align}
	If $m_1=m_2$ then $b_1=b_2$, a contradiction to the assumption that the lines are distinct. 
	If $m_1\neq m_2$, then $m_1-m_2$ has a multiplicative  inverse in  $\mathbb{F}_q$.
		Hence \eqref{eq:modulo} has a unique solution, namely $(m_1-m_2)^{-1}(b_1-b_2)$, so $L_1,L_2$ can have at most one common neighbor. This verifies Claim 1. \\
		
\textbf{Claim 2.}  {\it There is a partition of $V(G)$ into $p^3$ parts, each of size $2p$, such that there is an edge between any two distinct parts.} \\
	\indent For $x\in\F_q$, let $\cP_x = \{(x,y) : y\in\F_q\}$.  We see that  $\{\cP_x : x\in \F_q\}$ is a partition of $\cP$. 
	Furthermore, the set of parallel classes $\{\cL_m : m\in\F_q\}$ forms a partition of $\cL$ since each line in $\cL$ belongs to some parallel class and two distinct parallel classes are disjoint.  We further partition each $\cP_x$ and each $\cL_m$ as follows: 
	Let $H$ denote a subgroup of order $p$ of the group of $\F_q$ under addition. 
	Let $a_1,\ldots, a_p$ be members of distinct cosets of $H$ and let $A=\{a_1,\ldots,a_p\}$. 
	Note that $\{a+H : a\in A\}$ partitions $\F_p$.  For $x\in\mathbb{F}_q$, $h\in H$, $m\in\mathbb{F}_q$, and  $a\in A$, let
	\begin{align*}
			\cP_{x,h} 	= 	\left\{(x,y) : y\in A+h\right\} 	~~\mbox{ and } ~~
		\cL_{m,a} 	= 	\left\{\left\{(x, mx+b) : x\in\F_q\right\} : b\in a+H\right\} .
	\end{align*}
	
	Recall that since the $\cP_{x,h}$'s form a partition of $\cP$ and $\cL_{m, a}$'s form a partition of $\cL$,  their union is $\cP\cup \cL= V(G)$.
	Note that there are $p^3$ sets $\cP_{x,h}$ and $p^3$ sets $\cL_{m,a}$. 
 	Pair them up arbitrarily, take unions of the two sets in each pair, and call them $V_1, \ldots, V_{p^3}$.  Formally,  order all $\cP_{x,h}$'s and call them $\cP^*_1, \ldots, \cP^*_{p^3}$, 
	order all $\cL_{m,a}$'s  and call them $\cL^*_1, \ldots, \cL^*_{p^3}$, and let 
	$V_i = \cP_i^*\cup \cL_i^*$, $i=1, \ldots, p^3$.  Note that $|V_i|=2p$ for $i=1, \ldots, p^3$.
	We claim that $V_1, \ldots, V_{p^3}$ form a desired partition of the vertex set of $G$, i.e.,  there is an edge between any $V_i$ and any $V_j$, $1\leq i<j \leq p^3$. 
	In fact, in showing that there is an edge between any $V_i$ and $V_j$ for distinct $i$ and $j$, we show a stronger property that there is an edge between  $\cP_{x,h}$ and  $\cL_{m,a}$,  for any  $x,m\in\F_q$, $h\in H$, and $a\in A$. 
	
	In order for there to be an edge between $\cP_{x,h}$ and $\cL_{m,a}$, there must be $a_i\in A$ and $h_j\in H$ such that
	\begin{align*}
		a_i + h 	= 	mx + \left(a + h_j \right)  .
	\end{align*}
	Rearranging, we want to show there is an $a_i\in A$ and $h_j\in H$ such that
	\begin{align*}
		a_i - h_j 	= 	mx + a - h .
	\end{align*}
	Since sets $a_i + H = a_i - H = \left\{a_i - h_j : h_j\in H\right\}$ form distinct cosets for distinct $a_i$'s in $A$, the set $\{a_i + h_j : a_i\in A, h_j\in H\}$ is equal to $\F_q$ and so, no matter what $mx+a-h$ is, it must be equal to some $a_i+h_j$.  This verifies Claim 2.
	
Thus, $G$ is a $(p^3, 2p)$-graph that is $C_4$-free.
\end{proof}

\begin{proof}[Proof of Theorem \ref{K2t}]

	Let $n$ be a sufficiently large integer. 
	Let $N= n^{2/3}$. Note that there is an integer $k$, such that $N\leq k^2\leq N + 2\sqrt{N} $. 
	There is a prime $p$ such that $k\leq p\leq k + k^{0.6}$, \cite{BHP}. 
	Then $k^2\leq p^2 \leq (k+k^{0.6})^2$, i.e., $ N\leq p^2 \leq N + o(N)$.  
	Then $n= N^{3/2}$ satisfies $ n\leq p^3 \leq n +o(n)$. 
	
	Consider a $C_4$-free  graph $G$  that is a $(p^3, 2p)$-graph, as guaranteed by Lemma \ref{C4}.
	We see that $G$ contains an $(n, 2p)$-graph that is $C_4$-free. 
	Since $2p \leq (2 + o(1)) n^{1/3}$, it follows that $f(n, C_4)\leq (2+o(1)) n^{1/3}$. 

	To see the lower bound on $f(n, C_4)$, recall from Lemma~\ref{LemGenN} and the known upper bound on $\ex (n, C_4)$~\cite{FS}, that 
	\begin{align*}
		\binom{n}{2} \leq \ex (nk, C_4) \leq (1+o(1)) \frac{(nk)^{3/2}}{2} .
	\end{align*}
	This implies that $k \geq (1-o(1)) n^{1/3}$. 

	For the bounds on $f(n, K_{2,t})$ where $t\geq 3$, note that the upper bound argument gave a $C_4$-free, hence, $K_{2,t}$-free graph.
	The lower bound follows similarly using Lemma~\ref{LemGenN} and the known upper bound on $\ex (n, K_{2,t})$~\cite{FS},
$\ex (\ell, K_{2, t+1}) = (\frac{1}{2}+o(1)) \sqrt{t}\; \ell^{3/2}$. 
	Consequently, $k \geq (t^{-1/3}-o(1)) n^{1/3}$.
\end{proof}~\\

\section{Trees}	
\label{sec:tree}
%
%
%

\begin{proof}[Proof of Theorem \ref{tree}]
	Let $T$ be a tree on $t$ edges, $t\geq 2$.
	By Lemma~\ref{LemGenN}, we have that 
	\begin{equation}\label{eq}
	\max \biggl\{k\in \mathbb{N}:  \binom{n}{2} \geq  \ex (nk, T) +1 \biggr\}  \leq ~f(n,T)~ \leq  \min \biggl\{k \in \mathbb{N}: n\leq  {\rm r}(T;k)-1 \biggr\}. 
	\end{equation}
	Since $\ex (nk, T) \leq nk(t-1)$  and ${\rm r}(T; k)\geq (t-1)\lfloor (k+1)/2\rfloor +1$,   see \cite{ErdG} and  \cite{GRS}, we have 
	\begin{equation}\nonumber
	\max \biggl\{k\in \mathbb{N}:  \binom{n}{2} \geq  nk(t-1) +1 \biggr\}  \leq ~f(n,T)~ \leq  \min \biggl\{k \in \mathbb{N}: n\leq  (t-1)\biggl\lfloor \frac{k+1}{2}\biggr\rfloor \biggr\}
	\end{equation}
	and the claimed general  bounds on $f(n, T)$ hold.\\

	When $T$ is a star, we have $\ex (n, T)\leq n(t-1)/2$ and  ${\rm r}(T, k)=k(t-1) + \epsilon$, where $\epsilon=1$ if $k$ is even and $t$ is even, and $\epsilon=2$ otherwise, see \cite{BuRo1}. 
Using (\ref{eq}) and these more precise bounds on the extremal and Ramsey numbers,  we have that $\bigl\lceil\frac{n-1}{t-1}\bigr\rceil\leq f(n ,T) \leq \bigl\lceil\frac{n}{t-1}\bigr\rceil$.
\end{proof}~\\

\section{Conclusions}
\label{sec:conc}
We considered, for large $n$,  the smallest possible value of $k$ such that  there is an $H$-free graph that is a split of $K_n$, or in other words, an $(n,k)$-graph. 
We determined the order of such a graph in terms of the extremal number of $H$, for each graph $H$ that has a well-defined Tur\'an exponent to within a logarithmic term. 

Noting that $(n,k)$-graphs can be easily transformed into $K_n$-minors, we see that for some $H$, the obtained results give "small" $K_n$ minors that are $H$-free.
Indeed,  let  $H$ have minimum degree at least $3$. Consider a balanced  $(n,k)$-graph $G$  that is $H$-free,  with blobs $B_1, \ldots, B_n$. 
Consider pairwise disjoint sets $X_1, \ldots, X_n$ of new vertices and let $X_i\cup B_i$ induce a subdivision  $S_i$ of a star, i.e.   a spider  with legs of length at least $2$ and leaf sets $B_i$, $i=1, \ldots, n$.  Then the union of $G$ and all $S_i$'s  is still $H$-free and forms a $K_n$-minor. 

Finally, we would like to state the following conjecture, which would remove the logarithmic term from our bound: 
\begin{conj}
	There exists a positive constant $c$ such that if $\ex(nk,H) > cn^2$, then $f(n,H)\leq k$. 
\end{conj}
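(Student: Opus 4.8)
The conjecture is Theorem~\ref{main} with the logarithmic factor deleted and the gap hypothesis on the Tur\'an exponents removed, and by Lemma~\ref{LemGenN} it is sharp up to the value of $c$. The easy cases reduce quickly: for non-bipartite $H$ it is immediate since $f(n,H)=2$, and for a forest $T$ on $t$ edges it follows from Theorem~\ref{tree} together with $\ex(nk,T)\le nk(t-1)$ once $c$ is a large enough absolute constant. So assume $H$ is bipartite and not a forest, so that $\ex(\ell,H)$ is superlinear, and fix an upper Tur\'an exponent $b<2$ (one exists since $H\subseteq K_{|V(H)|,|V(H)|}$). By Lemmas~\ref{LemGenN} and~\ref{max-degree} the task becomes: given an $H$-free graph $G$ on $N=nk$ vertices with $M:=|E(G)|=\Theta(\ex(N,H))$, $M\ge c'n^2$, and $\Delta(G)=O(M/N)$, split $V(G)$ into $n$ parts of size $k$ with an edge between every two parts.

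At the threshold $M=\Theta(n^2)$ the obstruction to the argument of Theorem~\ref{main} is structural, not a question of constants: under the uniform random colouring one has $\mu=\E[S_{1,2}]=2M/n^2=\Theta(1)$, so Janson's inequality only gives $\Pr(S_{1,2}=0)\le e^{-\Theta(1)}$ and the union bound over the $\binom n2$ colour pairs is worthless --- a uniform random colouring leaves $\Theta(n^2)$ pairs of colour classes unjoined. A different mechanism is needed, and the $C_4$ construction of Lemma~\ref{C4} indicates which: there the host graph and a particular algebraic partition of its vertices are produced \emph{together}, so that ``every two parts are joined'' holds deterministically (in fact every point-part is joined to every line-part). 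Expansion of the host alone cannot do this: the incidence graph in Lemma~\ref{C4} is an excellent expander, yet two generic vertex sets of size $k$ in it need not span an edge.

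The plan I would follow for the structured cases is to imitate Lemma~\ref{C4} for every bipartite $H$ whose extremal number is governed by an algebraic construction --- incidence graphs of generalized hexagons and octagons for $C_6$ and $C_{10}$, norm graphs and projective norm graphs for $K_{s,t}$ with $t$ large, and the corresponding algebraic or random-algebraic extremal examples for the other graphs of Corollary~\ref{cor}. In each case the goal is to equip the extremal graph with a partition of its vertex set into equal parts such that every two parts span an edge, exploiting the underlying group action or field structure exactly as in Lemma~\ref{C4}; this would give the conjecture unconditionally for all such $H$, strengthening Corollary~\ref{cor} for them the way Theorem~\ref{K2t} strengthens it for $K_{2,t}$. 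The cleanest form of this step would be to isolate one sufficient condition --- roughly, a near-extremal $H$-free Cayley-type graph whose connection set is balanced with respect to a suitable subgroup --- and verify it uniformly across the known constructions.

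The main obstacle is the fully general case. When $H$ is bipartite with no explicit near-extremal construction, it is a notorious open problem whether near-extremal $H$-free graphs can even be taken pseudorandom, let alone with the extra partitionable structure above, so the algebraic route is unavailable. Here I would attempt instead a semi-random, nibble-style construction of the partition directly from $G$: assign the vertices to the $n$ blobs in rounds, maintaining at each round that the number of still-unjoined blob pairs drops by a constant factor and that the blob sizes stay within $o(k)$ of $k$, using the $\Theta(n^2)$ edges of $G$ as a reservoir to repair missed pairs. Keeping the relevant conditional probabilities bounded away from $0$ as the blobs approach their final size --- that is, controlling the growing dependence between the pair requirements and the size constraints --- is the step I expect to be genuinely difficult, and is presumably why the authors state this only as a conjecture.
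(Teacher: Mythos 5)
The statement you were given is the paper's closing \emph{conjecture}; the authors offer no proof of it and explicitly leave it open (see Remark~\ref{rem:logterm}). Your proposal, as you yourself acknowledge in its last sentence, is not a proof but a problem analysis: a sketch of two possible attacks together with the obstructions to each. Since there is no proof in the paper to compare against, the only verdict I can give is that the conjecture remains unproved by your text as well; this should not be presented as a proof.

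That said, your diagnosis of why the result is hard is accurate and worth keeping. The easy reductions are right (for non-bipartite $H$, $\ex(\ell,H)=\Theta(\ell^2)$ and $f(n,H)=2$, so $c\ge 1/2$ suffices; for a tree $T$ on $t$ edges, $\ex(nk,T)>cn^2$ with $c\ge 2$ forces $k>2n/(t-1)\ge f(n,T)$ by Theorem~\ref{tree} --- note Theorem~\ref{tree} is stated for trees, so the forest case needs a word more than you give it). You correctly identify that at the boundary $M=\Theta(n^2)$ one has $\mu=\Theta(1)$, so Janson plus the union bound over $\binom n2$ colour pairs is useless and a uniform random partition leaves $\Theta(n^2)$ unjoined pairs; and you correctly observe that Lemma~\ref{C4} succeeds because the host graph and the partition are co-designed via the field/coset structure, expansion alone being insufficient (two random size-$p$ parts of the affine-plane incidence graph span $\Theta(1)$ expected edges). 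One caveat on your Lemma~\ref{max-degree} reduction: the conclusion $\Delta(G)=O(M/N)$ requires $q=O(1)$, which the lemma only delivers when $\ex(N,H)=\Theta(N^b)$; dropping the exponent-gap hypothesis, as the conjecture does, makes $q$ potentially polynomially large, which is an additional difficulty your outline glosses over. The two routes you propose --- a ``balanced Cayley'' construction per algebraic extremal family, and a nibble/repair process for the general case --- are plausible directions, but neither is carried out here, and you rightly flag the dependence-control step in the nibble as the genuine obstacle. Read this as a well-informed commentary on an open problem, not as a solution.
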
~\\

\section*{Acknowledgements}

Martin's research was partially supported by a Simons Foundation Collaboration Grant (\#353292) and an award from the Deutscher Akademischer Austauschdienst (DAAD): Research Stays for University Academics and Scientists (Program 57442043) to visit the Karlsruhe Institute of Technology. Axenovich's research was partially supported by the Deutsche Forschungsgemeinschaft (DFG) grant (FKZ AX 93/2-1).

We thank Torsten Ueckerdt for introducing this problem to us, Tamar Mirbach for conversations on $f(n, C_4)$, and David Conlon for discussions on Tur\'an exponents. We also thank Craig Timmons for some informative discussions.

This manuscript has two corrections: Theorem \ref{tree}  has been missing floors and ceiling. It is corrected and the proof has been modified. We are indebted to Barnabas Janzer for pointing out this error. Additionally, in the third paragraph of the introduction, we state explicitly that $n<|V(H)|$ implies $f(n,H)=1$.

\newpage
\nocite{*}
\bibliography{references}{} 
\bibliographystyle{plain}
\newpage

\end{document}